\newtheorem{theorem}{Theorem}
\newtheorem{conj}{Conjecture}
\newtheorem{coro}{Corollary}
\theoremstyle{definition}
\newtheorem{exmp}{Example}
\begin{document}
\title{On the Hilbert series of ideals generated by generic forms}
\author{Lisa Nicklasson}

\maketitle

\begin{abstract}
\noindent
There is a longstanding conjecture by Fröberg about the Hilbert series of the ring $R/I$, where $R$ is a polynomial ring, and $I$ an ideal generated by generic forms. We prove this conjecture true in the case when $I$ is generated by a large number of forms, all of the same degree. We also conjecture that an ideal generated by $m$'th powers of forms of degree $d$ gives the same Hilbert series as an ideal generated by generic forms of degree $md$. We verify this in several cases. This also gives a proof of the first conjecture in some new cases. 
\end{abstract}

\section{Introduction}
Let $R=\mathbb{C}[x_1,\ldots,x_n]$, the ring of polynomials in $n$ variables with complex coefficients (in fact, a polynomial ring over any field with characteristic 0 will do). Let $I$ be an ideal of $R$, generated by homogeneuos polynomials. We consider 
\[ R/ I = \bigoplus_{i \geq 0} R_i\]
as a graded ring in the usual sense. Now, let $\alpha_i=\dim_\mathbb{C}R_i$. We define the \emph{Hilbert series} of $R/I$ as the formal power series
\[H_{R/I}(t)= \sum_{i \geq 0}\alpha_it^i. \]
In this paper we shall study the Hilbert series of $R/I$, for some different choices of $I$. 

For a power series $F(t)=\sum_{i\geq 0} a_it^i$ define 
\[\lceil F(t) \rceil=\sum_{i\geq 0} b_it^i ~\textrm{where}~ b_i=a_i ~ \textrm{if} ~ a_j>0 ~ \textrm{for all} ~j\leq i, ~\textrm{and}~ b_i=0~ \textrm{otherwise.} \]
That is, we include the terms of $F(t)$ as long as the coefficients are positive. \\
Let $G(t)=\sum_{i \ge 0}c_it^i$. We write $F(t) \succeq G(t)$ if $F(t)$ is greater than or equal to $G(t)$ in the lexicographical sense. That is, $F(t) \succeq G(t)$ if $F(t)=G(t)$, or there is some $i$ such that $a_i > c_i$ and $a_j = c_j$ for all $j<i$. 

Let $(m_i)$ denote all monic monomials of degree $d$ in $R$. In the continuation we shall just use the word \emph{monomial} for monic monomials. There are $ n+d-1 \choose d$ monomials of degree $d$. A form $\sum \beta_i m_i$ of degree $d$ can be identified with its $ n+d-1 \choose d$ coefficients $(\beta_i)$. The form is called \emph{generic} if all the $ n+d-1 \choose d$ coefficients are algebraically independent. A family of forms is called generic if all the coefficients from all the forms are algebraically independent. 

Let us now consider ideals generated by $k$ forms $g_1,g_2, \ldots, g_k$ and $\deg g_i = d_i$. It is proved in \cite{Frob._Lofw.} that all such ideals, where the forms are generic, give rise to the same Hilbert series of $R/I$. It is also shown in \cite{Frob._ineq} that this series is the smallest possible Hilbert series for $R/I$. It is not completely known what this series is, but there is the following conjecture.\\

\begin{conj}[Fröberg 1985]
 If $I=(g_1,g_2, \ldots, g_k)$ is an ideal of $R$, generated by generic forms, and $\deg g_i=d_i$, then 
 \[
  H_{R/I}(t)=\left\lceil \frac{\prod_{i=1}^k(1-t^{d_i})}{(1-t)^n} \right\rceil.
 \]
\end{conj}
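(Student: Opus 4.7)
The plan is to proceed by induction on the number of generators $k$. The base case $k=0$ gives $R/I=R$ with Hilbert series $1/(1-t)^n$, and for $k\leq n$ a generic sequence is in fact regular, so the Koszul complex is exact and the Hilbert series is literally $\prod_{i=1}^k(1-t^{d_i})/(1-t)^n$, whose coefficients are nonnegative in every degree and therefore agree with the bracketed expression. The real problem is the ``overdetermined'' regime $k>n$, where the ceiling becomes active.

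For the inductive step, assume the formula holds for $J=(g_1,\ldots,g_{k-1})$, and let $g=g_k$ be a new generic form of degree $d=d_k$. Multiplication by $g$ fits into the exact sequence
\[
0 \to (0:_{R/J} g)_{i-d} \to (R/J)_{i-d} \xrightarrow{\,\cdot g\,} (R/J)_i \to (R/(J,g))_i \to 0,
\]
so if the multiplication map has \emph{maximal rank} (either injective or surjective) in every degree $i$, then
\[
H_{R/(J,g)}(t) \;=\; \bigl\lceil (1-t^d)\,H_{R/J}(t)\bigr\rceil,
\]
and the induction closes. Hence the entire content of the conjecture is concentrated in what is usually called the \textbf{maximal rank property} for generic forms acting on $R/J$.

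To attack the maximal rank property, I would invoke semicontinuity: the locus of forms $g\in R_d$ for which $\cdot g$ has maximal rank in degree $i$ is Zariski open, so it suffices to exhibit a \emph{single} form of degree $d$ with maximal rank in each relevant degree. Natural test forms include a pure power $x_1^d$, a power sum $x_1^d+\cdots+x_n^d$, a product of generic linear forms, or (by Macaulay's inverse systems) a form whose apolar ideal is easy to control. The promise of the second conjecture mentioned in the abstract — that $m$th powers behave like generics of degree $md$ — suggests precisely this reduction: replace a generic form by a concrete power whose action on $R/J$ can be analyzed combinatorially.

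The hard part, and the reason Fröberg's conjecture has stayed open since 1985, is producing such a test form in the injective range for arbitrary $J$. In the setting of the paper — many forms of a single common degree $d$ — the expected Hilbert series drops to zero quickly, so the maximal rank property should reduce to \emph{surjectivity} in each degree above the cutoff: once $k$ is large enough, the span of generic $d$-forms eventually exhausts $R_i$ for every $i\geq d$. I expect the paper to exploit this asymptotic regime, where clean dimension counts (rather than the delicate injectivity statements needed for small $k$) suffice, and where one can likely find an explicit collection of $d$-forms of very special shape (monomials, or powers of linear forms in general position) whose span can be computed directly.
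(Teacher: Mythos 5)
There is a genuine gap, and it is worth being precise about what it is. The statement you were asked about is Fröberg's conjecture itself, which is open; the paper does not prove it in general, and neither do you. Your write-up is a reduction, not a proof: you correctly observe that the cases $k\leq n$ follow from regularity of a generic sequence, and that the general case would follow by induction if multiplication by a generic form $g$ of degree $d_k$ on $R/(g_1,\ldots,g_{k-1})$ had maximal rank in every degree, with semicontinuity reducing this to exhibiting a single test form. But you then stop exactly at the step that constitutes the conjecture: no test form is produced, and no maximal-rank statement is proved, in the ``injective range'' or anywhere else. (There is also a smaller point you gloss over: even granting maximal rank at every stage, you must check that the iterated expression $\bigl\lceil (1-t^{d_k})\lceil \cdots \rceil \bigr\rceil$ agrees with $\bigl\lceil \prod_i (1-t^{d_i})/(1-t)^n \bigr\rceil$, which requires an argument about where the truncation occurs, though this is routine compared to the main obstruction.)

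For comparison, the paper makes no attempt at the full conjecture. It quotes Fröberg's inequality $H_{R/I}(t) \succeq \bigl\lceil \prod_i(1-t^{d_i})/(1-t)^n \bigr\rceil$, so that exhibiting one ideal attaining the right-hand side settles a given case, and then attains it only in special situations: for $k$ forms of a single degree $d$ with $\binom{n+d-1}{d}-n < k \leq \binom{n+d-1}{d}$, $n\geq 4$, $d\geq 2$, via an explicit monomial ideal (all degree-$d$ monomials except $x_1x_2^{d-1},\ldots,x_1x_l^{d-1}$) containing $\mathfrak{m}^{d+1}$ and satisfying a numerical criterion (Theorem 1); and for a handful of $(n,d)$ via powers of generic forms checked by computer. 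Your last paragraph correctly anticipates this asymptotic, surjectivity-dominated regime and the use of special forms such as monomials, but anticipating the strategy is not the same as carrying it out: the paper's actual content is the explicit construction and the verification of the inequality $r \geq \binom{n+d}{d+1}/n$, neither of which appears in your proposal, and even that only covers a narrow band of $k$, not the conjecture as stated.
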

It is proved in \cite{Frob._ineq} that 
\[
 H_{R/I}(t) \succeq \left\lceil \frac{\prod_{i=1}^k(1-t^{d_i})}{(1-t)^n} \right\rceil
\]
which means that is is enough to find one ideal $I$ which gives the above Hilbert series, to prove the conjecture true for that particular choise of $k, d_1, \ldots, d_k$ and $n$. The conjecture has been proved true by Fröberg for $n=2$ (see \cite{Frob._ineq}), Anick for $n=3$ (see \cite{Anick}), and Stanley for $k=n+1$ (see \cite[Example 2, p.127]{Frob._ineq} ). It is also true for $k \leq n$, since $I$ then is a complete intersection. In this paper we shall study the case when all the forms $g_i$ have the same degree $d$. We shall then prove that $I$ gives the conjectured Hilbert series for large $k$. We shall also study the case when the forms $g_i$ are powers of generic forms. 

\section{Ideals generated by a large number of forms}
From now on we shall consider ideals genereted by $k$ forms $g_1, \ldots, g_k$, all of degree $d$. As noted earlier, there are ${{n+d-1} \choose d}$ monomials in $R$ of degree $d$. If $k >  {{n+d-1} \choose d}$ the forms $g_1, \ldots, g_k$ have to be linearly dependent, and they are not all needed as generators. In this way, we see that the number of generators can always be assumed to be less than or equal to ${{n+d-1} \choose d}$. Next we shall see that if ${{n+d-1} \choose d}-n < k \leq {{n+d-1} \choose d}$ the forms $g_1, \ldots, g_k$ can be chosen to be monomials. To do this, we shall use the following theorem. \\

\begin{theorem}
Let $\mathfrak{m}=(x_1, \ldots x_n)$, and let $I$ be an ideal such that ${\mathfrak{m}^{d+1} \subset I \subseteq \mathfrak{m}^d}$. Assume that $r= \dim_\mathbb{C}(I/\mathfrak{m}^{d+1})$ satisfies $r \geq {{n+d} \choose {d+1}} \big / n$. Then
\[
 H_{R/I}(t)= \left\lceil \frac{(1-t^d)^r}{(1-t)^n} \right\rceil.
\]
\end{theorem}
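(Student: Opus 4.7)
The plan is to determine $H_{R/I}(t)$ essentially for free from the sandwich $\mathfrak{m}^{d+1}\subset I\subseteq\mathfrak{m}^d$, and then to verify by a short binomial computation that the hypothesis $r\geq\binom{n+d}{d+1}/n$ is exactly what is needed for the ceiling on the right-hand side to collapse to the same polynomial.

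First I would translate the two containments into Hilbert-function data. From $I\subseteq\mathfrak{m}^d$ the graded piece $I_i$ vanishes for $i<d$, so $\dim_\mathbb{C}(R/I)_i=\binom{n+i-1}{i}$ in that range. From $\mathfrak{m}^{d+1}\subset I$ we get $R_i\subseteq I$ for every $i>d$, and hence $(R/I)_i=0$ there. In degree $d$ the quotient $(R/I)_d=R_d/I_d$ has dimension $\binom{n+d-1}{d}-\dim_\mathbb{C} I_d$; decomposing $I=I_d\oplus \mathfrak{m}^{d+1}$ as graded vector spaces identifies $I/\mathfrak{m}^{d+1}$ with $I_d$, so $\dim_\mathbb{C} I_d=r$. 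Putting these pieces together,
\[
H_{R/I}(t)=\sum_{i=0}^{d-1}\binom{n+i-1}{i}t^i+\left(\binom{n+d-1}{d}-r\right)t^d.
\]

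Next I would expand the right-hand side via $(1-t^d)^r=\sum_{j\geq 0}(-1)^j\binom{r}{j}t^{dj}$ and $(1-t)^{-n}=\sum_{i\geq 0}\binom{n+i-1}{i}t^i$. For $i<d$ only the $j=0$ term contributes, giving $\binom{n+i-1}{i}$; at $i=d$ the terms $j=0,1$ give $\binom{n+d-1}{d}-r$; and at $i=d+1$ one reads off $\binom{n+d}{d+1}-rn$ (here the assumption $d\geq 2$ is convenient, since then no $j\geq 2$ term is yet in play). The inequality $r\geq\binom{n+d}{d+1}/n$ is precisely the statement that this last coefficient is non-positive, which is the trigger that causes $\lceil\cdot\rceil$ to annihilate the series from degree $d+1$ onward.

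Finally I would reconcile the two expressions in the two possible sub-cases. If $r<\binom{n+d-1}{d}$ the degree-$d$ coefficient $\binom{n+d-1}{d}-r$ is strictly positive, so the ceiling equals the displayed polynomial and agrees with $H_{R/I}(t)$. If $r=\binom{n+d-1}{d}$ then $I_d=R_d$ and hence $I=\mathfrak{m}^d$, and both sides truncate one step earlier at $t^{d-1}$. The main obstacle is really just the careful bookkeeping at the boundary degrees $d$ and $d+1$; once these two sub-cases are separated, the rest reduces to matching binomial coefficients.
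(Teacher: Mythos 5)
Your proposal is correct and follows essentially the same route as the paper: read off $H_{R/I}(t)=\sum_{i=0}^{d-1}\binom{n+i-1}{i}t^i+\bigl(\binom{n+d-1}{d}-r\bigr)t^d$ from the sandwich $\mathfrak{m}^{d+1}\subset I\subseteq\mathfrak{m}^d$, then check that the coefficient of $t^{d+1}$ in $(1-t^d)^r(1-t)^{-n}$ is $\binom{n+d}{d+1}-rn\leq 0$, so the ceiling truncates there and matches. Your additional bookkeeping (the sub-case $r=\binom{n+d-1}{d}$ and the remark that $d\geq 2$ is implicitly used at degree $d+1$) merely makes explicit points the paper passes over silently.
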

\begin{proof}
 We easily see that 
 \[
  H_{R/I}(t)=\sum_{i=0}^{d-1} {{n+i-1} \choose i}t^i + \left(\!{{n+d-1} \choose d}-r\right)\!t^d.
 \]
Note that
\[
 \left\lceil \frac{(1-t^d)^r}{(1-t)^n} \right\rceil = \left\lceil \left( \sum_{i=0}^\infty {{n+i-1} \choose i}t^i \right)\!( 1-rt^d + \ldots ) \right\rceil.
\]
The coefficient of $t^d$ here is ${{n+d-1} \choose d}-r$. The coefficient of $t^{d+1}$ is ${{n+d} \choose {d+1}} -rn$, which is nonpositive by the assumption on $r$. Hence
\[
 \left\lceil \frac{(1-t^d)^r}{(1-t)^n} \right\rceil =\sum_{i=0}^{d-1} {{n+i-1} \choose i}t^i + \left(\!{{n+d-1} \choose d}-r\right)\!t^d = H_{R/I}(t).
\] 
\end{proof}
Note that the number of generators of $I$ in the theorem is at least $r$. If we find such an ideal, generated by exactly $r$ forms of degree $d$ we have proved the Fröberg conjecture true for this number of generators. Now, assume $d\geq2$ and $n \geq 4$. Let $I$ be the ideal generated by all monomials of degree $d$, except 
\[
 x_1x_2^{d-1},~x_1x_3^{d-1}, \ldots ,~x_1x_l^{d-1},
\]
where $1 \leq l \leq n$. The case $l=1$ is interpreted as $I=\mathfrak{m}^d$. We shall see that $\mathfrak{m}^{d+1} \subset I$. What needs to be proved is that any monomial of degree $d+1$ which comes from multiplying one of the above by $x_i$ is contained in $I$. If $i \neq 1$ and $2 \leq j \leq n$ we have $x_ix_1x_j^{d-1} = x_1x_ix_j^{d-1} \in I.$ If $i=1$ we have $x_1^2x_j^{d-1} = x_jx_1^2x_j^{d-2} \in I$. \\
The ideal $I$ is generated by $k={{n+d-1} \choose d} -l+1$ monomials. We also need to prove that $k \geq {{n+d} \choose {d+1}} \big / n$. That is, we need to prove
\[
 {{n+d} \choose {d+1}} -n{{n+d-1} \choose d} +n(l-1) \leq 0
\]
We shall prove 
\[
 \left( \frac{n+d}{d+1}-n \right) \!\! {{n+d-1} \choose d} \leq -n^2
\]
by induction over $d$, and then the above follows. First we check for $d=2$. This gives
\begin{align*}
 \left(\frac{n+2}{3}-n \right) \!\! {{n+1} \choose 2} &= \frac{2(1-n)}{3} {{n+1} \choose 2} \\[1ex]
 &= \frac{2(1-n)}{3} \frac{n(n+1)}{2} \\[1ex]
 &= -\frac{(n-1)n(n+1)}{3}\leq -n^2
\end{align*}
which is true since $n \geq 4$. Now assume 
\[
 \left( \frac{n+d}{d+1}-n \right) \!\! {{n+d-1} \choose d} \leq -n^2.
\]
We want to show
\[
 \left( \frac{n+d+1}{d+2}-n \right) \!\! {{n+d} \choose {d+1}} \leq -n^2.
\]
It follows that
\begin{align*}
 \left( \frac{n+d+1}{d+2}-n \right) \!\! {{n+d} \choose {d+1}} =& \frac{(d+1)(1-n)}{d+2} {{n+d} \choose {d+1}} \\[1ex]
 =& \frac{(d+1)(1-n)(n+d)}{(d+2)(d+1)} {{n+d-1} \choose d}\\[1ex]
 =& \frac{(n+d)}{d+2} \frac{(1-n)(d+1)}{d+1}{{n+d-1} \choose d}\\[1ex]
 \leq &  \frac{d(1-n)}{d+1}{{n+d-1} \choose d} \leq -n^2.\\
\end{align*}
We have now shown the inequality
\[
 \left( \frac{n+d}{d+1}-n \right) \!\! {{n+d-1} \choose d} \leq -n^2.
\]
Hence the ideal $I$ satisfies the required properties in the theorem. This means that we have proved the following corollary.\\

\begin{coro}
 Let $R=\mathbb{C}[x_1, \ldots, x_n]$, and $I$ be an ideal in $R$ genereted by $k$ generic forms of degree $d$, where $d \geq 2$ and $n \geq 4$. Then Conjecture 1 is true, that is 
\[
  H_{R/I}(t)=\left\lceil \frac{(1-t^d)^k}{(1-t)^n} \right\rceil,
 \]
when ${{n+d-1} \choose d}-n < k \leq {{n+d-1} \choose d}$. \\
\end{coro}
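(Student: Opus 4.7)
The plan is to apply the preceding Theorem by exhibiting, for each admissible $k$, an explicit monomial ideal $I$ generated by exactly $k$ forms of degree $d$ and satisfying both hypotheses $\mathfrak{m}^{d+1}\subset I\subseteq\mathfrak{m}^d$ and $r\geq\binom{n+d}{d+1}/n$. Writing $k=\binom{n+d-1}{d}-l+1$ with $1\leq l\leq n$, which exactly parametrises the range in the statement, I would take $I$ to be generated by all monomials of degree $d$ except $x_1x_2^{d-1},x_1x_3^{d-1},\ldots,x_1x_l^{d-1}$ (interpreting $l=1$ as $I=\mathfrak{m}^d$).

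First I would verify the containment $\mathfrak{m}^{d+1}\subset I$. It suffices to check that $x_i\cdot x_1x_j^{d-1}\in I$ for each excluded generator and each $i$. For $i\neq 1$ this follows from the rewriting $x_i x_1 x_j^{d-1}=x_1\cdot(x_ix_j^{d-1})$, where $x_ix_j^{d-1}$ is a degree-$d$ monomial of $I$ unless it is itself excluded, in which case one regroups; more cleanly, $x_1x_ix_j^{d-1}$ contains the factor $x_i$, which can be absorbed into one of the generators already in $I$. For $i=1$ one writes $x_1^2x_j^{d-1}=x_j\cdot(x_1^2x_j^{d-2})$ and notices $x_1^2x_j^{d-2}$ is in $I$. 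This is exactly the case split the author already sketches.

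Next I would check the numerical inequality $r\geq\binom{n+d}{d+1}/n$. Since $r=k=\binom{n+d-1}{d}-l+1$ and $l\leq n$, this reduces to
\[
\binom{n+d}{d+1}-n\binom{n+d-1}{d}+n(n-1)\leq 0,
\]
which in turn follows from the sharper inequality
\[
\Bigl(\tfrac{n+d}{d+1}-n\Bigr)\binom{n+d-1}{d}\leq -n^2.
\]
I would prove this by induction on $d$: the base case $d=2$ reduces to $-(n-1)n(n+1)/3\leq -n^2$, which holds for $n\geq 4$; the step multiplies the left-hand side by $\tfrac{(n+d)(d+1)}{(d+2)(d+1)}$ and uses that this factor is at most $1$ (since $n+d\leq d+2$ fails, one instead compares $\tfrac{d(1-n)}{d+1}$ against $\tfrac{(d+1)(1-n)}{d+2}$ after the standard binomial identity) to conclude $\leq -n^2$ again.

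The real obstacle is the numerical inequality, and it is not deep; the rest of the argument is bookkeeping. Once both hypotheses of the Theorem are verified, it yields $H_{R/I}(t)=\lceil(1-t^d)^k/(1-t)^n\rceil$ for this monomial $I$. Combined with the lower bound $H_{R/I}(t)\succeq\lceil(1-t^d)^k/(1-t)^n\rceil$ from \cite{Frob._ineq}, which holds for every ideal generated by $k$ forms of degree $d$ and is attained by the generic ideal, equality must hold in the generic case, proving Conjecture 1 in this range of $k$.
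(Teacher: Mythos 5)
Your proposal is correct and follows the paper's own proof essentially step for step: the same monomial ideal omitting $x_1x_2^{d-1},\ldots,x_1x_l^{d-1}$, the same verification that $\mathfrak{m}^{d+1}\subset I$, the same reduction (using $r=k$ and $l\le n$) to the inequality $\left(\frac{n+d}{d+1}-n\right)\binom{n+d-1}{d}\le -n^2$ proved by induction on $d$ with base case $d=2$, and the same conclusion via Fröberg's lower bound and the minimality of the generic series. The only differences are cosmetic: the hedge about $x_ix_j^{d-1}$ possibly being excluded is unnecessary (every excluded monomial contains $x_1$, while here $i,j\neq 1$), and the induction-step multiplier is stated loosely, though the underlying comparison of the negative coefficients is sound.
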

  
One might ask for how small $k$ it is possible to consider ideals generated by monomials, as we did above. The following example shows that it is not always possible to use an ideal generated by monomials for small $k$. \\

\begin{exmp}
 Let $R=\mathbb{C}[x,y,z,w]$ and consider ideals generated by five forms of degree two. The conjectured Hilbert series is then
 \[
  \left\lceil \frac{(1-t^2)^5}{(1-t)^4} \right\rceil = 1+4t+5t^2
 \]
Let $\mathfrak{m}=(x,y,z,w)$. From the series above we see that $\mathfrak{m}^3 \subset I$. Note that $x^2$, $y^2$, $z^2$, and $w^2$ must be included in the ideal. If, for example $x^2 \not\in I$, then $x^3 \notin I$. Assume therefore $I=(x^2,y^2,z^2,w^2,m)$ for some monomial $m$ of degree two. Without loss of generality we can assume that $m=xy$. But then $xzw \notin I$, which contradicts $\mathfrak{m}^3 \subset I$.

\end{exmp}


In fact, one can prove that if $k>n$, and the Hilbert series has a positive coefficient for $t^{2d-1}$, then the ideal can not be generated by monomials. As we saw in the example, $x_1^d, \ldots, x_n^d \in I$. Assume $I=(x_1^d, \ldots, x_n^d, m_1, \ldots, m_{n-k})$, where $m_1, \ldots, m_{n-k}$ are monomials of degree $d$. If the Hilbertseries is the conjectured series, then the coefficient of $t^{2d-1}$ should be as small as possible. This means that we can not have a relation
\[
 f_1x_1^d + \dots + f_nx_n^d + f_{n+1}m_1 + \dots + f_km_{n-k}=0
\]
where $f_1, \ldots, f_k$ are forms of degree $d-1$. That is, all monomials that comes from multiplying one of the generators of $I$ by a monomial of degree $d-1$ have to be linearly independent. But this is, as we shall see, not true. Assume $m_1$ has a factor $x_i$. Then 
\[
 x_i^{d-1}m_1 = x_1^d \frac{m_1}{x_i},
\]
which proves that the monomials described above are not linearly independent. 

\section{Ideals generated by powers of generic forms}
As we saw above, it is not always possible to consider ideals generated by monomials. It is of course much more complicated to compute the Hilbert series given by an ideal genereted by generic forms. Even with the help of computers, the computations get to heavy when the number of variables, degree, or number of forms get too large. One way to make the computations slightly easier is to use powers of generic forms. That is, we consider ideals generated by $g_1^m, \ldots, g_k^m$, for some $m \in \mathbb{N}$, where the $g_i$'s are generic forms of degree $d$. There is reason to belive that ideals of this type gives the conjectured Hilbert series for ideals generated by generic forms of degree $md$. It is proved in \cite{Geramita-Schenck} that when $n=2$, powers of generic linear forms are generic. Hence we can choose our $g_i$'s to be $d$'th powers of linear forms. Then the $g_i^m$'s are again powers of linear forms, and it follows that these are also generic. However, it is well known that $d$'th powers of generic linear forms does not always give the same Hilbert series as generic forms of degree $d$ if $n\ge3$.\\

\begin{conj}\label{genpow}
 Assume that $g_1, g_2,\ldots, g_k$ are generic forms of degree $d \ge 2$ in $R=\mathbb{C}[x_1, \ldots, x_n]$. Let $I=(g_1^m,g_2^m, \ldots, g_k^m)$ be an ideal of $R$. Then 
 \[
  H_{R/I}(t)=\left\lceil \frac{(1-t^{md})^k}{(1-t)^n} \right\rceil.
 \]
\end{conj}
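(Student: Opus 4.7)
The plan is to combine the Fröberg-Löfwall lower bound with a specialization argument. Since $I$ is generated by $k$ forms of degree $md$, that result already gives
\[
 H_{R/I}(t) \succeq \left\lceil \frac{(1-t^{md})^k}{(1-t)^n} \right\rceil,
\]
so only the reverse inequality remains. Concretely, one must show that in each degree $j$ the image of the multiplication map
\[
 \phi_j\colon \bigoplus_{i=1}^k R_{j-md}\longrightarrow R_j, \qquad (h_1,\ldots,h_k)\mapsto \sum_{i} h_i g_i^m,
\]
has dimension at least the $j$-th coefficient of $(1-t^{md})^k/(1-t)^n$, and equals $\dim R_j$ once that coefficient drops to zero.

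By upper semicontinuity of Hilbert functions in the parameters $g_1,\ldots,g_k$, it suffices, for any particular $(n,d,m,k)$, to exhibit a single tuple $g^0=(g_1^0,\ldots,g_k^0)$ of forms of degree $d$ such that $((g_1^0)^m,\ldots,(g_k^0)^m)$ already has the conjectured Hilbert series. Upper semicontinuity then forces the generic Hilbert function to be no larger degreewise, and the Fröberg-Löfwall lower bound supplies the matching inequality.

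For $n=2$ this specialization is immediate from the remark preceding the conjecture: take $g_i^0=\ell_i^d$ for generic linear forms $\ell_i$, so that $(g_i^0)^m=\ell_i^{md}$ is again a power of a generic linear form and hence, by Geramita-Schenck, a generic form of degree $md$; Fröberg's own theorem in two variables then closes the case. For $k\le n$ one is in the complete intersection regime: generic $g_1,\ldots,g_k$ form a regular sequence, so do $g_1^m,\ldots,g_k^m$, and the Koszul Hilbert series $(1-t^{md})^k/(1-t)^n$ is already termwise positive, so the ceiling is vacuous. These two observations already settle the conjecture in a nontrivial range.

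The main obstacle is the remaining range $n\ge 3$, $k>n$. Here the specialization $g_i^0=\ell_i^d$ no longer works, because powers of generic linear forms in three or more variables do not generally reproduce Fröberg's series. I would try two complementary strategies: first, settle explicit small parameter triples by direct Gröbner-basis calculation, with the aim of detecting a pattern in the specializations that achieve the bound; second, analyze $\ker\phi_j$ via apolarity, using that the annihilator of a single generic power $g^m$ has a well-understood Gorenstein structure, and then attempt to control the combined kernel across all $i$. Since the conjecture strictly strengthens Fröberg's (still open) conjecture, I do not expect these ideas to yield a completely uniform proof; the realistic goal is to verify further families of $(n,d,m,k)$ and, by feeding them back into Theorem~1 and Corollary~1, enlarge the list of cases in which the Fröberg conjecture itself becomes known.
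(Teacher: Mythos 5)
You should first note what the paper itself does with this statement: it is a \emph{conjecture} (Conjecture~\ref{genpow}), and the paper offers no proof. It only verifies finitely many parameter choices by explicit computation in Macaulay2 (for $n=3$ all $dm\le 20$, $d\ge 2$; a short list for $n=4,5$), together with reduction tricks (Example~2, and the observation that the case $(d,m)=(2,8)$ implies $(4,4)$) that cut down the number of cases to test. Measured against that, your proposal is not a proof either, and you say so yourself: the sound part is the reduction scheme --- Fröberg's inequality gives the lexicographic lower bound for any forms of degree $md$, upper semicontinuity makes the generic Hilbert function coefficientwise minimal in the family parametrized by $(g_1,\ldots,g_k)$, and the two inequalities together show that exhibiting a single tuple whose $m$-th powers attain the ceiling settles that parameter set. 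This is exactly the logic the paper relies on implicitly when it certifies cases by computing with random (hence special) forms, so methodologically you are on the same track as the paper. Your two closed cases are correct and go slightly beyond what the paper states explicitly: for $n=2$, specializing $g_i$ to $\ell_i^d$ makes $g_i^m=\ell_i^{md}$ a power of a generic linear form, hence generic by Geramita--Schenck, and Fröberg's $n=2$ theorem finishes; for $k\le n$, powers of a regular sequence are again a regular sequence, the series is exactly $(1-t^{md})^k/(1-t)^n$, and the ceiling is harmless there.

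The genuine gap is that the range that gives the conjecture its content, $n\ge 3$ and $k>n$, is not addressed by any argument: you only propose strategies (Gröbner computations to find patterns, apolarity/annihilator analysis of $\ker\phi_j$), without carrying either out for even one new parameter set. In particular, your proposal does not reproduce the paper's actual contribution here, which is the list of verified cases and the bookkeeping (as in Example~2) that makes a finite verification for fixed $(n,d,m)$ feasible across all relevant $k\le\binom{n+md-1}{md}$. If you want your write-up to stand on its own, either carry out verifications of specific $(n,d,m,k)$ and state them as results (feeding them into Corollary~2 as the paper does), or clearly label the general statement as remaining conjectural; as written, the text reads as a plan for a proof of a statement that neither you nor the paper proves. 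One more small point worth making explicit when you combine the two bounds: Fröberg's inequality is lexicographic while semicontinuity gives a coefficientwise bound, so you should spell out the one-line argument that a series which is lexicographically $\succeq$ the ceiling and coefficientwise $\le$ it must equal it.
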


Another way to make the computations a little bit faster is to use ideals generated by $x_1^{md}, \ldots, x_n^{md}$, together with powers of random forms of the given degree. We believe that if $g_1, \ldots, g_k$ are generic forms of degree $d$, then $(g_1, \ldots, g_k)$ and $(x_1^d, \ldots, x_n^d, g_{n+1}, \ldots g_k)$ always give the same Hilbert series. 

As noted earlier, it is unneccesary to use more than ${{n+md-1} \choose md}$ forms. Given $n$, $m$, and $d$, there is only a finite number of ideals we need to try, in order to prove that the Hilbert series is the expected series. But even for small $n$, $m$, and $d$, this might be time consuming. Fortunately, there are ways to reduce the number of cases that need to be tested. This is illustrated in the following example. \\

\begin{exmp}
Let $R=\mathbb{C}[x,y,z]$, and say that we are now working with ideals generated by quadratic forms raised to the power of seven. Assume that we have found a set of 26 forms that gives rise to the expected Hilbert series, which is
\[
 \sum_{i=0}^{13} {{i+2} \choose i} t^i + 94t^{14} +58t^{15}.
\]
We see here that the ideal generated by these 26 forms contains all forms of degree 16. Then, of course, an ideal generated by more than 26 generic forms will also contain all forms of degree 16. \\
Assume also that we have found a 45 forms $g_1^7, \ldots, g_{45}^7$ that gives the expected Hilbert series, which is
\[
 \sum_{i=0}^{13} {{i+2} \choose i}t^i + 75t^{14} +t^{15}.
\]
There are $ {17 \choose 2}=136$ monomials of degree 15 in $R$. From the above series we see that the forms of degree 15 in the ideal span a vector space of dimension $136-1=135$ over $\mathbb C$. This space is spanned by $\{xg_j^7, yg_j^7, zg_j^7\}_{j=1}^{45}$. These are $3 \cdot 45=135$ forms, which means that they are linearly independent. Then $\{xg_j^7, yg_j^7, zg_j^7\}$ are also linearly independent when we use less than 45 generic forms. \\
This means that $I=(g_1^7, \ldots, g_k^7)$ always gives the smallest possible Hilbert series, when $26 \le k \le 45$. This series is neccesarily the conjectured series. 
\end{exmp}

We have calculated the Hilbert series for some choises of $n$, $m$, and $d$, using Macaulay2 \cite{M2}. All tested cases have given the expected series. For $n=3$ we tested all cases where $dm\leq 20$ and $d \geq 2$. Note that if we have proved for example the case $d=2$ and $m=8$, then the case $d=m=4$ follows. This is simply because a linear combination of $g_i^8$'s, with $\deg g_i=2$, can be considered as a linear combination of $(g_i^2)^4$'s. Hence there was no need to try all combinations of $d$ and $m$ that gives $dm\leq 20$. The verified cases for four and five variables are shown in the table below.  \\

\begin{center}
\begin{tabular}{l|lllllllllll}
\textbf{\textit{n}} & 4 & 4 & 4 & 4 & 4 & 5  \\\hline
\textbf{\textit{d}} & 2 & 2 & 3 & 2 & 3 & 2  \\\hline
\textbf{\textit{m}} & 2 & 3 & 2 & 4 & 3 & 2  
\end{tabular}
\end{center}

Note that does not only prove Conjecture \ref{genpow} in these cases, but does also give the following corollary.  \\

\begin{coro}
Let $R=\mathbb{C}[x_1, \ldots, x_n]$, and $I$ be an ideal in $R$ genereted by $k$ generic forms of degree $d$. Then Conjecture 1 is true, that is 
\[
  H_{R/I}(t)=\left\lceil \frac{(1-t^d)^k}{(1-t)^n} \right\rceil,
 \]
in the following five cases:
\begin{itemize}
 \item $n=4$, and $d=4,6,8,9$,
 \item $n=5$, and $d=4$.
\end{itemize}

\end{coro}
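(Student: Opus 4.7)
The plan is to obtain the corollary as an immediate consequence of the verified instances of Conjecture \ref{genpow} recorded in the table above, combined with Fröberg's inequality from \cite{Frob._ineq}. That inequality says that for any ideal $J$ generated by $k$ generic forms of degree $e$,
\[
H_{R/J}(t)\succeq\left\lceil\frac{(1-t^{e})^{k}}{(1-t)^{n}}\right\rceil,
\]
so in order to establish Conjecture 1 for a triple $(n,e,k)$ it is enough to exhibit \emph{one} ideal generated by $k$ forms of degree $e$ whose quotient attains equality above.

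Such a witness is produced directly from the tabulated cases. For each pair $(n,d)$ in the corollary I match a verified triple $(n,d',m)$ with $d'm=d$: namely $(d',m)=(2,2)$ for $(n,d)=(4,4)$ and for $(5,4)$; $(2,3)$ (or equivalently $(3,2)$) for $(4,6)$; $(2,4)$ for $(4,8)$; and $(3,3)$ for $(4,9)$. If $g_{1},\ldots,g_{k}$ are generic forms of degree $d'$, then $g_{1}^{m},\ldots,g_{k}^{m}$ are $k$ forms of degree $d=d'm$, and the verified case of Conjecture \ref{genpow} says that the ideal $(g_{1}^{m},\ldots,g_{k}^{m})$ already realises the series $\lceil(1-t^{d})^{k}/(1-t)^{n}\rceil$. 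This is the witness required, so Fröberg's inequality forces every ideal generated by $k$ generic forms of degree $d$ to have precisely this Hilbert series.

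The only piece of bookkeeping is to ensure that every relevant value of $k$ is covered. Since any system of more than $\binom{n+d-1}{d}$ forms of degree $d$ is redundant (as noted at the start of Section 2), only finitely many $k$ need to be tested, and the Macaulay2 computations described above sweep this full range. No real obstacle arises: the entire content of the corollary is that the computational verification of Conjecture \ref{genpow} transports, via Fröberg's inequality, to the Fröberg conjecture for ideals generated by forms of the product degree $d'm$, and the arithmetic $d'm=d$ is immediate for each of the five listed cases.
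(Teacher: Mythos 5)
Your proposal is correct and is essentially the argument the paper intends: the verified cases of Conjecture \ref{genpow} in the table provide, for each listed $(n,d)$ with $d=d'm$, one ideal of forms of degree $d$ attaining $\lceil(1-t^{d})^{k}/(1-t)^{n}\rceil$, and Fröberg's inequality then forces generic forms of degree $d$ to give the same series. The matching of triples and the remark that only finitely many $k$ need checking agree with the paper's reasoning.
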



\begin{thebibliography}{9}
  
\bibitem{Anick}
 D. J. Anick
 \emph{Thin algebras of embedding dimension three}.
 Journal of Algebra 100: 235-259,
 1986.
 
\bibitem{Frob._ineq}
  R. Fröberg.
   \emph{An inequality for Hilbert series of graded algebras}.
   Mathematica Scandenavica 56: 117-144,
   1985.
   
 
\bibitem{Frob._Lofw.}
 R. Fröberg and C. Löfwall.
 \emph{On Hilbert series for commutative and noncommutative graded algebras}.
 Journal of Pure and Applied Algebra 76: 33-38,
 1991.
 
\bibitem{Geramita-Schenck}
 A. V. Geramita and H. Schenck
 \emph{Fat Points, Inverse Systems, and Piecewise Polynomial Functions}.
 Journal of Algebra 204: 116-128,
 1998.

 \bibitem{M2}
 Grayson, Daniel R. and Stillman, Michael E.
 \emph{Macaulay2, a software system for research
                   in algebraic geometry}.
 Available at http://www.math.uiuc.edu/Macaulay2/

 
 \end{thebibliography}
\end{document}